\documentclass[a4paper, 11pt]{amsart} 
\usepackage[
  margin=1.9cm,
  includefoot,
  footskip=30pt,
]{geometry}
\usepackage{amsmath,amssymb,amscd}
\usepackage{amsfonts}
\usepackage[english]{babel}
\usepackage[leqno]{amsmath}
\usepackage{amssymb,amsthm}
\usepackage{mathrsfs} 
\usepackage{amscd}
\usepackage{enumerate}
\usepackage{epsfig}
\usepackage{relsize}
\usepackage{layout}
\usepackage[usenames,dvipsnames]{xcolor}
\usepackage[backref=page]{hyperref}
\usepackage{tikz}
\hypersetup{
 colorlinks,
 citecolor=Green,
 linkcolor=Red,
 urlcolor=Blue}
\usepackage[matrix,arrow,tips,curve]{xy}
\input{xy}
\xyoption{all}
\definecolor{darkgreen}{rgb}{0.0, 0.7, 0.0}
\definecolor{purple}{rgb}{0.5, 0.0, 0.5}
\definecolor{red}{rgb}{0.8, 0.2, 0.0}

\newtheorem{thm}{Theorem}[section]
\newtheorem{bthm}{Theorem}

\newtheorem{lemma}[thm]{Lemma}

\newtheorem{claim}[thm]{Claim}

\numberwithin{equation}{section}
\theoremstyle{definition}
\newtheorem{defi}[thm]{Definition}

\theoremstyle{remark}
\newtheorem{remark}[thm]{Remark}

\newcommand{\Z}{\mathbb{Z}}
\newcommand{\C}{\mathbb{C}}
\newcommand{\Q}{\mathbb{Q}}

\def \P{\mathbb{P}}

\def \F{\mathcal F}

\def \E{\mathcal E}

\def\O{\mathcal O}

\def\M0{\mathcal M^0}

\newcommand{\ch}{\operatorname{ch}}
\newcommand{\td}{\operatorname{td}}
\begin{document}

\title[A linear lower bound on the Ulrich complexity of hypersurfaces]{A linear lower bound on the Ulrich complexity of hypersurfaces}

\author[A.F. Lopez, D. Raychaudhury, Y. Takahashi]{Angelo Felice Lopez, Debaditya Raychaudhury and Yuta Takahashi}

\address{\hskip -.43cm Angelo Felice Lopez, Dipartimento di Matematica e Fisica, Universit\`a di Roma
Tre, Largo San Leonardo Murialdo 1, 00146, Roma, Italy. e-mail {\tt angelo.lopez@uniroma3.it}}

\address{\hskip -.43cm Debaditya Raychaudhury, Department of Mathematics and Statistics, University of New Mexico, Albuquerque, NM 87131. e-mail {\tt rcdeba@gmail.com}}

\address{\hskip -.43cm Yuta Takahashi, Department of Mathematics, Faculty of Science and Engineering, Chuo University. 1-13-27 Kasuga, Bunkyo-ku, Tokyo 112-8551, Japan. email: {\tt yuta0630takahashi0302@gmail.com}}

\thanks{The first author is partially supported by the GNSAGA group of INdAM. The third author is partially supported by JST SPRING, Japan Grant Number JPMJSP2170.}

\thanks{{\it Mathematics Subject Classification}: Primary 14J70. Secondary 14J60, 14F06.}

\begin{abstract} 
We give a lower bound on the Ulrich complexity of hypersurfaces in terms of their dimension.
\end{abstract}

\maketitle

\section{Introduction}

Let $X \subset \P^N$ be a smooth irreducible variety of dimension $n$. In order to study the geometry of $X$, in recent years, an interesting point of view has emerged, the one of Ulrich bundles. An Ulrich bundle is a vector bundle $\E$ on $X$ such that $H^i(\E(-p))=0$ for all $i \ge 0$ and $1 \le p \le n$. In the presence of such bundles, several geometrical features can be discovered on $X$, see for example the seminal paper \cite{es}, the survey \cite{b} and the book \cite{cmrpl}. The main open problem about Ulrich bundles is their existence, conjectured to happen in all cases. Aside for curves and several surfaces, there are a few families of varieties that are known to  carry an Ulrich bundle, among which complete intersections, by \cite{hub}. Once the existence is established, one defines the Ulrich complexity as
$${\rm uc}(X, \O_X(1))=\min\{r \ge 1 : \ \hbox{there exists a rank} \ r \ \hbox{Ulrich bundle on} \ X\}.$$ 
We simply write ${\rm uc}(X)$ when $\O_X(1)$ is naturally given.

In the case of a smooth hypersurface $X \subset \P^{n+1}$ of degree $d \ge 2$ (when $d=1$, ${\rm uc}(X)=1$), Ulrich complexity of $X$ falls within the Buchweitz, Greuel and Schreyer's conjecture \cite{bgs}, a stronger conjecture regarding aCM bundles, implying that ${\rm uc}(X) \ge 2^{\lfloor \frac{n-1}{2} \rfloor}$ (sharp for $d=2$). It is also conjectured in \cite{rt1} that ${\rm uc}(X) \ge 2^{\lfloor \frac{n+1}{2} \rfloor}$ when $X$ is general. The above conjectures are wide open, the only known general result, of a slightly different flavour, being \cite{e}. As far as we know, the best-known lower bound in terms of the dimension was shown in \cite[Thm.~3.1]{bes}: 
$${\rm uc}(X) \ge \sqrt{n+2}-1.$$
 
It is the purpose of this paper to improve the above bound to a lower bound that is essentially the dimension. In order to state it precisely, let us define, for integers $m \ge 3, d \ge 3$, the functions
$$B(m)=\frac{30\pi^2(2m-1)(2m-3)}{24d^2(2m-3)\sin^2(\frac{\pi}{d})+5\pi d(2m-1)\sin(\frac{\pi}{d})}$$
and
$$F(m)= \min\{2m, B(m)\}.$$
Observe that $F(m)=2m$ for $m \ge 21$, see Section \ref{num}. It is also possible that, for $3 \le m \le 20$, still $F(m)=2m$. The precise values of $d$ for which the latter holds are given in Table \ref{tab1}. For the other values of $d$, note that $F(m) \ge \frac{3m}{2}$ for $3 \le m \le  20$. Then we have

\begin{bthm} \hskip 3cm
\label{main}

Let $X \subset \P^{n+1}$ be a smooth hypersurface of dimension $n \ge 6$ and degree $d \ge 3$. If $n$ is odd we have 
$${\rm uc}(X) \ge \begin{cases} n-1 & \text{ if } n \ge 43 \\ F(\frac{n-1}{2}) & \text{ if } 7 \le n \le 41
\end{cases}$$
while if $n$ is even we have
$${\rm uc}(X) \ge \begin{cases} n-2 & \text{ if } n \ge 44 \\ F(\frac{n-2}{2}) & \text{ if } 6 \le n \le 42.
\end{cases}$$
Moreover, if $n$ is even and $X$ is very general, then
$${\rm uc}(X) \ge \begin{cases} n & \text{ if } n \ge 44 \\ F(\frac{n}{2}) & \text{ if } n \le 42.
\end{cases}.$$
\end{bthm}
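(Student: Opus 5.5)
This is a plan, not a checked argument: the idea is to restrict an Ulrich bundle to a linear section of even dimension $2m$, where Hodge-theoretic constraints on Chern classes become available, and then combine integrality of the Chern characters with positivity of Chern classes. The relevant $m$ is $\frac{n-1}{2}$ or $\frac{n-2}{2}$ in general, and $\frac n2$ for $n$ even and $X$ very general.

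\textbf{Reduction.} Let $\E$ be an Ulrich bundle of rank $r$ on $X$. Since the restriction of an Ulrich bundle to a general hyperplane section is again Ulrich, I would restrict $\E$ to a general linear section $Y\subset X$ of dimension $2m$.
\begin{itemize}
\item If $n$ is odd, take $2m=n-1$.
\item If $n$ is even and $X$ is arbitrary, take $2m=n-2$. The point is that for a smooth complete intersection $Y$ of dimension $2m$ cut from a hypersurface of dimension at least $2m+2$, Lefschetz gives that the algebraic cohomology $H^{2k}(Y,\Z)\cap H^{k,k}$ for $k\ne m$ is $\Z h^k$. In middle degree the classes coming from $X$ restrict to multiples of $h^m$.
\item If $n$ is even and $X$ is very general, take $Y=X$ with $2m=n$. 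Here I would instead invoke the Noether--Lefschetz type statement that the Hodge classes in middle degree are just $\Z h^{m}$.
\end{itemize}
In all cases I then assume $c_k(\E|_Y)=a_k h^k$ for all $k$ in the relevant range, with $a_k\in\Z$ (or in $\frac1d\Z$ after pushing forward, to be checked).

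\textbf{Using the Ulrich condition.} Since $\E|_Y$ is Ulrich, its Hilbert polynomial is forced: $\chi(\E|_Y(t))=rd\binom{t+2m}{2m}$. In particular $c_1=\frac{r}{2}(d-1)h$ up to the standard formula, and the whole Chern character is determined in terms of $h$ by Riemann--Roch. Concretely, $\ch(\E|_Y)\td(Y)$ must equal $r\,\ch$ of the pullback of an Ulrich-type "virtual" line bundle. One then finds $\ch(\E|_Y)=r\,e^{\frac{d-1}{2}h}\cdot\phi(h)$, where $\phi$ is an explicit power series built from $\td(Y)^{-1}$, i.e.\ from $\frac{h}{1-e^{-h}}$ and $\frac{dh}{1-e^{-dh}}$.

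The key constraint comes from requiring the Chern classes $a_k$ obtained from this character to be integers (or appropriately integral), together with positivity: Ulrich bundles are globally generated, so all Chern classes are nonnegative. The lower bound should arise because the Newton identities convert $\ch_k=r\,\beta_k h^k/k!$ into elementary symmetric functions, and a vanishing or negativity appears unless $r$ is large.

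The mechanism I expect is the following. The Chern roots behave like the "roots" of $\phi$. Globally generated rank $r$ with all data proportional to $h$ means the Chern polynomial $\sum a_k t^k$ has degree at most $r$. If the formal series dictated by Riemann--Roch has nonzero coefficients up to degree $2m$, one gets $r\ge 2m$ unless truncation is compatible. The factor $\frac{\sin(\pi/d)}{\pi}$ in $B(m)$ strongly suggests that the relevant series has singularities at $h=\pm 2\pi i/d$-type points. Its coefficients then behave like $(d/2\pi)^k$, and the analysis compares them via an inequality between power sums, e.g.\ $p_2^2\le r\,p_4$ by Cauchy--Schwarz applied to real-root considerations, or a bound of the form $r\ge p_{2m-1}p_{2m-3}/\ldots$. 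So the bound $B(m)$ should come from a Cauchy--Schwarz or Newton-type inequality on the ratio of two consecutive Taylor coefficients of $\log$ of the series, of orders $2m-1$ and $2m-3$. Those coefficients would be estimated by the dominant-singularity asymptotics plus an explicit error term, which is where the $\sin^2(\pi/d)$ and $\sin(\pi/d)$ terms come from. The bound $2m$ would come from a separate degree and nonvanishing argument.

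\textbf{Main obstacle.} The hardest step is the explicit two-sided estimate of the Taylor coefficients of the logarithm of the Todd-type series, uniformly in $d\ge3$. I would try a partial fraction expansion over the poles $2\pi i k/d$ and $2\pi i k$, and bound the tail by $\zeta$-values. The numerical range $m\le 20$ is then handled by tabulation (Section~\ref{num}).
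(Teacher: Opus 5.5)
Your setup is sound and matches the paper in substance. You reduce to a situation where all algebraic classes are rational multiples of powers of $h$. You then use the Ulrich condition to force $\ch(\E)=r\,e^{\frac{d-1}{2}h}\,\frac{d\sinh(h/2)}{\sinh(dh/2)}$; the paper gets this from GRR for $X\subset\P^{n+1}$, but Riemann--Roch on the Hilbert polynomial works too. You also correctly see that the contradiction should come from a degree-$2m$ Chern class that must vanish because the rank is $<2m$.

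\textbf{The gap.} You give no argument that this Chern class is \emph{nonzero}, and that is the whole content of the proof. Since $\ch$ is $r$ times a fixed series, Newton's identities make every $c_k$ a polynomial in $r$ with no constant term. Proving nonvanishing means showing the term linear in $r$ dominates the higher powers of $r$, and this is exactly where $r<B(m)$ is used. That is why the theorem gives $\min\{2m,B(m)\}$: the contradiction needs both $r<2m$ (vanishing) and $r<B(m)$ (nonvanishing). Your picture has this structure backwards: $B(m)$ as a separate lower bound from a Cauchy--Schwarz inequality between power sums, with $2m$ coming from a ``separate'' argument.

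The Cauchy--Schwarz step is also not available. Chern roots are formal classes, not real numbers, and after the natural twist the power sums are $p_{2j}=(-1)^jrb_jH^{2j}$ with $b_j>0$, which alternate in sign. Integrality and the nonnegativity of Chern classes of globally generated bundles cannot supply the missing nonvanishing either, since $c_{2m}=0$ is both integral and nonnegative.

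\textbf{The missing ingredient} (the paper's argument) runs as follows.
\begin{enumerate}
\item Pass to the $\Q$-twisted bundle $\F=\E\langle-\frac{d-1}{2}H\rangle$. Its character is the even series $r\,\frac{d\sinh(H/2)}{\sinh(dH/2)}$, so all odd power sums vanish.
\item Rewrite the Newton identities as $\sum_s J_st^s=\exp\bigl(\sum_{j\ge1}A_jt^j\bigr)$, where $J_s=c_{2s}(\F)H^{n-2s}/d$ and $A_j=(-1)^{j+1}rb_j/(2j)$.
\item This gives $J_m=\sum_{\lambda\vdash m}\prod_jA_j^{m_j}/m_j!$. The length-one term $A_m$ is linear in $r$, and partitions of length $\ell$ contribute terms of order $r^\ell$.
\item Use two estimates. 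First, $\frac54q^j\le b_j/(2j)!\le\frac{2d\sin(\pi/d)}{\pi}q^j$ with $q=d^2/4\pi^2$, from Euler's product; your partial-fraction idea is a plausible substitute for this step. Second, the combinatorial bound $\sum_{\lambda\vdash m,\,\ell(\lambda)=\ell}\prod_j((2j-1)!)^{m_j}/m_j!\le\frac1{\ell!}\binom{m-1}{\ell-1}(2m-2\ell+1)!$.
\item With these, the terms of length $\ge2$ are bounded by a geometric series, and this bound is $<|A_m|$ exactly when $r<B(m)$. The factors $2m-1$ and $2m-3$ in $B(m)$ come from this length estimate, not from comparing Taylor coefficients of orders $2m-1$ and $2m-3$.
\item Hence $c_{2m}(\F)\neq0$, contradicting the vanishing of $c_k(\F)$ for $k>r$ when $r<2m$.
\end{enumerate}
Without this domination argument, or a substitute for it, your plan does not yield the stated bound.
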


Similar calculations can be performed for $n \in \{4, 5\}$, but they give ${\rm uc}(X) \ge 4$, a result that is already known (see for example \cite{lr1, rt1, rt2} where several lower bounds are given).

Note that $F(m)=2m$ for any $m \ge 3$ when $d=3$ by Table \ref{tab1}. Hence, if $X$ is a smooth cubic hypersurface of dimension $n \ge 6$, then ${\rm uc}(X) \ge n-1$ if $n$ is odd, ${\rm uc}(X) \ge n-2$ if $n$ is even and ${\rm uc}(X) \ge n$ if $n$ is even and $X$ is very general. On the other hand, since $r$ is divisible by $3$ for  any rank $r$ Ulrich bundle on $X$ \cite[Prop. 2.5]{ks}, \cite{fk}, the above lower bounds improve to the first multiple of $3$.

We work over the complex numbers.

\section{Homogeneous symmetric polynomials and series}

In order to study the Chern character of an Ulrich bundle, we will need a few elementary estimates.

\begin{defi}
For each $n \ge 1, N \ge 1$ we denote by 
$$h_n(X_1,\ldots, X_N)= \sum\limits_{\substack{i_1+\ldots+i_N=n \\ i_1 \ge 0 \ldots i_N \ge 0}} X_1^{i_1} \cdots X_N^{i_N}$$ 
the {\it complete homogeneous symmetric polynomial of degree $n$ in the variables $X_1,\ldots, X_N$}. We set $h_0=1$.
\end{defi}

Then we have

\begin{lemma}
\label{els}
For $i \ge 1$, let $x_i \in \C$ be such that $\sum\limits_{i \ge 1} x_i$ is absolutely convergent and let $M>0$ be such that $|x_i| \le M$ for all $i \ge 1$. Let
$$G(z)=\prod_{i \ge 1}(1-x_iz)^{-1}.$$
Then $G(z)$ is well-defined for $|z| \le R$ and any $R$ with $0 < R < \frac{1}{M}$. Moreover
\begin{equation}
\label{mac}
G(z)=\sum_{n \ge 0} h_n(x_1,x_2,\ldots)z^n
\end{equation}
where $h_n(x_1,x_2,\ldots)=\sum\limits_{\substack{m_1+m_2+\ldots = n \\ m_1 \ge 0, m_2 \ge 0, \ldots}} \prod\limits_{i \ge 1} x_i^{m_i}$ is also absolutely convergent.
\end{lemma}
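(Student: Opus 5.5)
Fix $R$ with $0<R<\frac1M$ and work on the closed disc $|z|\le R$. Then $|x_iz|\le MR<1$ for every $i$, so each factor $1-x_iz$ is nonzero and $(1-x_iz)^{-1}$ is holomorphic on a neighbourhood of the disc. To see that the infinite product converges, I would write $(1-x_iz)^{-1}=1+u_i(z)$ with
$$u_i(z)=\frac{x_iz}{1-x_iz}, \qquad |u_i(z)|\le \frac{R}{1-MR}\,|x_i|.$$
Since $\sum_i|x_i|<\infty$, the series $\sum_i|u_i(z)|$ converges uniformly on $|z|\le R$. The standard criterion for infinite products then applies: if $\sum|u_i|$ converges uniformly, then $\prod(1+u_i)$ converges uniformly, absolutely, and independently of the order of the factors. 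Hence $G$ is well-defined and holomorphic on the disc, and it is nonzero there because no factor vanishes.

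Next, expand each factor as a geometric series, $(1-x_iz)^{-1}=\sum_{m\ge0}x_i^mz^m$. This series converges absolutely, and
$$\sum_{m\ge0}|x_i|^m|z|^m=(1-|x_i||z|)^{-1}.$$
For the finite partial products $G_k(z)=\prod_{i\le k}(1-x_iz)^{-1}$, multiplying the absolutely convergent series (Cauchy product) gives
$$G_k(z)=\sum_{n\ge0}h_n(x_1,\dots,x_k)z^n.$$
The key step is to let $k\to\infty$ and identify the coefficients, and this is where absolute convergence of $h_n(x_1,x_2,\dots)$ comes from. I would bound the absolute version uniformly in $k$:
$$\sum_{n}\ \sum_{m_1+\dots+m_k=n}\ \prod_i|x_i|^{m_i}|z|^{n}=\prod_{i\le k}(1-|x_i|R)^{-1}\le \prod_{i\ge1}(1-|x_i|R)^{-1}=:C<\infty.$$
The right-hand product is finite because $\sum|x_i|R<\infty$ and $|x_i|R\le MR<1$ (use $\log(1-t)^{-1}\le t/(1-MR)$ for $0\le t\le MR$).

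Consequently the family $\{\prod_i x_i^{m_i}z^{\sum m_i}\}$, indexed by finitely supported sequences $(m_i)$, is absolutely summable, with total sum of absolute values at most $C$. In particular, for each $n$ the sum defining $h_n(x_1,x_2,\dots)$ converges absolutely: it is bounded by $C/R^n$, or directly by $h_n(|x_1|,|x_2|,\dots)<\infty$. Because the family is absolutely summable, we may rearrange freely. Summing it by grouping according to $n$ gives $\sum_n h_n(x_1,x_2,\dots)z^n$. Summing it as the limit of the finite subfamilies with support in $\{1,\dots,k\}$ gives $\lim_k G_k(z)=G(z)$. This proves \eqref{mac}.

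The main obstacle is the justification of interchanging the limit in $k$ with the sum over $n$. Dominated convergence for sums, with the dominating bound $C$ above, handles it. Everything else is routine.
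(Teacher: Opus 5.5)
Your proof is correct. It matches the paper on well-definedness: the same bound $|(1-x_iz)^{-1}-1|\le R|x_i|/(1-MR)$ feeds the standard criterion for absolutely convergent products, and the finite products satisfy $G_N(z)=\sum_n h_n(x_1,\dots,x_N)z^n$.

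You diverge in how the coefficients are identified. The paper argues analytically:
\begin{itemize}
\item the M-test gives uniform convergence $G_N\to G$ on $|z|\le R$;
\item Cauchy's integral formula then makes the Maclaurin coefficients of $G$ the limits of those of $G_N$;
\item absolute convergence of $h_n(x_1,x_2,\dots)$ is obtained separately, from the bound $h_n(|x_1|,\dots,|x_N|)\le(\sum_{i\le N}|x_i|)^n$.
\end{itemize}
You instead show that the whole family $\prod_i x_i^{m_i}z^{\sum m_i}$, indexed by finitely supported sequences $(m_i)$, is absolutely summable, with total mass at most $\prod_{i\ge1}(1-|x_i|R)^{-1}<\infty$. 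Both sides of \eqref{mac} are then two ways of summing this one family: grouping by $n=\sum m_i$, or exhausting by the increasing index sets $\{(m_i): m_i=0 \text{ for } i>k\}$, whose sums are $G_k(z)$ by the finite Cauchy product.

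Your route is more elementary. It needs no holomorphy and no Cauchy formula, it gives the identity pointwise on the closed disc directly, and it yields absolute convergence of each $h_n$ and of $\sum_n h_nz^n$ in one stroke. The paper's route is shorter once the standard complex-analytic fact is granted.

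One wording slip: the subfamilies ``with support in $\{1,\dots,k\}$'' are not finite, since they are indexed by all of $\Z_{\ge0}^k$. What you actually need, and what holds, is that these index sets increase and exhaust the full index set. Absolute summability then forces their sums $G_k(z)$ to converge to the total sum.
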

\begin{proof}
Since $\sum_{i \ge 1} x_i$ is absolutely convergent, we have that $|x_i| \to 0$ for $i \to \infty$, hence an $M>0$ such that $|x_i| \le M$ for all $i \ge 1$ exists. If $|z| \le R$, we have that $|1-x_iz| \ge 1-|x_i||z| \ge 1-MR>0$ and therefore
\begin{equation}
\label{unif}
\left|\frac{1}{1-x_iz}-1\right|=\frac{|x_i||z|}{|1-x_iz|} \le \frac{|x_i|R}{1-MR}.
\end{equation}
Since $(1-x_iz)^{-1} \ne 0$ for all $i \ge 1$, it follows by \cite[Chapt.~5, Thm.~6]{ahl} that the product $G(z)=\prod_{i \ge 1}(1-x_iz)^{-1}$ is absolutely convergent, hence well-defined. Also, we get that
$$G(z) = \lim\limits_{N \to \infty} G_N(z)$$
where 
$$G_N(z)=\prod_{i \ge 1}^N(1-x_iz)^{-1}= \sum\limits_{n \ge 0}h_n(x_1,\ldots, x_N)z^n.$$
Moreover, the Weierstrass M-test and \eqref{unif} imply that $G_N(z)$ converges uniformly to $G(z)$ for $|z| \le R$.
Now note that the partial sums of the series $h_n(x_1,x_2,\ldots)$ are exactly the $h_n(x_1,\ldots, x_N)$ and
$$|h_n(x_1,\ldots, x_N)| \le \sum\limits_{\substack{i_1+\ldots+i_N=n \\ i_1 \ge 0 \ldots i_N \ge 0}} |x_1|^{i_1} \cdots |x_N|^{i_N}\le \left(\sum\limits_{i=1}^N|x_i|\right)^n.$$
Since all sums $\sum\limits_{i=1}^N|x_i|$ are bounded by hypothesis, it follows that $h_n(x_1,x_2,\ldots)$ is absolutely convergent and
$$\lim\limits_{N \to \infty} h_n(x_1,\ldots, x_N)=h_n(x_1,x_2,\ldots).$$
Finally, Cauchy's formula gives that the coefficient of $z^n$ in the Maclaurin series of $G(z)$ is precisely the limit, for $N \to \infty$, of the coefficients of $z^n$ in the Maclaurin series of $G_N(z)$ and this proves \eqref{mac}.
\end{proof}

We now consider the function that will calculate the Chern character of an Ulrich bundle.

\begin{lemma}
\label{mcl}
Let $d \in \Z, d \ge 3$, let $u=\frac{d-1}{2}$, let
$$f(z)=\frac{d}{\sum\limits_{k=0}^{d-1}e^{(u-k)z}}=\frac{d}{e^{uz}+e^{(u-1)z} + \ldots+e^{-uz}}$$
and consider its Maclaurin series
\begin{equation}
\label{serie}
f(z)=1+\sum_{j \ge 1}(-1)^j b_j \frac{z^{2j}}{(2j)!}.
\end{equation}
If $q:=\frac{d^2}{4 \pi^2}$ we have, for all $j \ge 1$, that the following estimates hold:
\begin{equation}
\label{bd}
\frac{5}{4} q^j \le \frac{b_j}{(2j)!}\le \frac{2\sin(\frac{\pi}{d})}{\frac{\pi}{d}} q^j.
\end{equation}
\end{lemma}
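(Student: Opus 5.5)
Write $\sum_{k=0}^{d-1}e^{(u-k)z}=\frac{\sinh(dz/2)}{\sinh(z/2)}$, so
$$f(z)=\frac{d\,\sinh(z/2)}{\sinh(dz/2)}.$$
We pass to $g(w)=f(iw)=\frac{d\sin(w/2)}{\sin(dw/2)}=1+\sum_j b_j\frac{w^{2j}}{(2j)!}$, which has nonnegative-looking coefficients. The estimate \eqref{bd} then becomes a statement about the Taylor coefficients $c_j=b_j/(2j)!$ of $g$.

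The plan is to write $g$ as an infinite product and invoke Lemma \ref{els}. Using $\sin(\pi x)=\pi x\prod_{k\ge1}(1-x^2/k^2)$ for numerator and denominator gives
$$g(w)=\frac{\prod_{k\ge1}\bigl(1-\frac{w^2}{4\pi^2k^2}\bigr)}{\prod_{k\ge1}\bigl(1-\frac{d^2w^2}{4\pi^2k^2}\bigr)}.$$
The numerator factors cancel exactly the denominator factors with $d\mid k$. Putting $z=w^2$, this yields
$$g=\prod_{k\ge1,\ d\nmid k}\Bigl(1-\frac{q}{k^2}z\Bigr)^{-1}.$$
Here $x_k=q/k^2$ for $d\nmid k$, the sum $\sum x_k$ is absolutely convergent, and $\max x_k=q$. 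Lemma \ref{els} then gives
$$c_j=h_j\bigl(q/k^2 : d\nmid k\bigr)=q^j\,h_j(1,1/4,1/9,\dots \text{ with multiples of } d \text{ omitted}).$$
All terms are positive, and this also identifies the signs in \eqref{serie}.

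\textbf{Lower bound.} Since all terms are positive, truncate to the variables $1$ and $1/4$; both are present because $d\ge3$, so $2$ is not a multiple of $d$. This gives
$$h_j\ge \sum_{i=0}^{j}4^{-i}=\frac{4}{3}\bigl(1-4^{-(j+1)}\bigr).$$
For $j\ge1$ this is $\ge \frac43\cdot\frac{15}{16}=\frac54$. The $j=1$ case is indeed $1+1/4=5/4$, so the constant is sharp from this argument.

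\textbf{Upper bound.} The target constant $\frac{\sin(\pi/d)}{\pi/(2d)}$ suggests a residue/partial-fraction argument rather than a product one. The function $g(w)=d\sin(w/2)/\sin(dw/2)$ is meromorphic with simple poles at $w=2\pi k/d$, $d\nmid k$. The nearest poles are $w=\pm 2\pi/d$, i.e. $z=1/q$.

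I would write the Mittag-Leffler expansion
$$g(w)=\sum_{d\nmid k}\frac{r_k}{w-2\pi k/d}.$$
Convergence is fine since $g$ is bounded on suitable circles, so no polynomial part appears beyond a constant, which must be checked. The residues are
$$r_k=\frac{d\sin(\pi k/d)}{(d/2)\cos(\pi k)}=2(-1)^k\sin(\pi k/d).$$
Expanding in powers of $w$ and pairing $\pm k$, the coefficient of $w^{2j}$ is
$$\sum_{k\ge1,\ d\nmid k} 2\cdot 2(-1)^{k+1}\sin(\pi k/d)\Bigl(\frac{d}{2\pi k}\Bigr)^{2j+1},$$
possibly with a sign adjustment. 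Hence $c_j=q^j\cdot\frac{2d}{\pi}\sum_k(-1)^{k+1}\sin(\pi k/d)/k^{2j+1}$, up to constants.

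The $k=1$ term alone gives exactly $\frac{2\sin(\pi/d)}{\pi/d}q^j$. The \textbf{main obstacle} is to show the remaining alternating tail is $\le0$. I would try grouping consecutive terms $k=2m,2m+1$ and using that $|\sin(\pi k/d)|/k^{2j+1}$ is decreasing enough in $k$ for $j\ge1$. This works because $\sin(\pi k/d)\le \pi k/d$, while $k^{-3}$ decays fast, though sign changes of $\sin$ for $k>d$ need care. If that pairing fails, a fallback is to bound the tail in absolute value by $\sum_{k\ge2}k^{-3}$-type sums. Then compare with the product bound from $h_j$ using the crude estimate $h_j\le\prod(1-1/k^2)^{-1}$ over the relevant $k$. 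I would check which of the two routes actually yields the stated constant.
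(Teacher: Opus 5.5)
Your product representation $\frac{b_j}{(2j)!}=q^jh_j$, with $h_j=h_j(1,\frac14,\frac19,\dots)$ and multiples of $d$ omitted, is correct, and so is your lower bound; both match the paper's proof. The genuine gap is the upper bound, which you do not prove.

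Your Mittag-Leffler formula is correct: the constant and the sign check out, and
$\frac{b_j}{(2j)!}=q^j\,\frac{2d}{\pi}\sum_{k\ge1}(-1)^{k+1}\sin(\frac{\pi k}{d})\,k^{-(2j+1)}$. Its $k=1$ term is exactly the target. But the argument then hinges on showing that the tail $\sum_{k\ge2}(-1)^{k+1}\sin(\frac{\pi k}{d})\,k^{-(2j+1)}$ is $\le 0$, and you leave this open. The grouping you suggest does not work termwise: for $d=3$ the pair $k=6,7$ contributes $+\frac{\sqrt{3}}{2}\,7^{-(2j+1)}>0$. Your fallback of bounding the tail in absolute value cannot close the gap either. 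It adds a strictly positive quantity (already the $k=2$ term $\sin(\frac{2\pi}{d})2^{-(2j+1)}$ is positive), so it gives a constant strictly larger than $\frac{2\sin(\pi/d)}{\pi/d}$.

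The missing idea is already contained in your product formula, and it makes the residue computation unnecessary. Split off the variable $1$ coming from $k=1$, and let $y=(\ell^{-2})_{\ell\ge2,\ d\nmid\ell}$. Every degree $j$ monomial in $(1,y)$ is $1^{j-n}$ times a degree $n$ monomial in $y$. Hence
$h_j(1,y)=\sum_{n=0}^{j}h_n(y)\le\sum_{n\ge0}h_n(y)=\prod_{\ell\ge2,\ d\nmid\ell}(1-\ell^{-2})^{-1}$,
where the last equality is Lemma \ref{els} with $M=\frac14$ evaluated at $z=1$. The factor $k=1$ must be excluded here, since $(1-1)^{-1}$ diverges; your phrase about the product ``over the relevant $k$'' leaves exactly this point unresolved.

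It remains to evaluate the product. First, $\frac12=\prod_{\ell\ge2}(1-\ell^{-2})=\prod_{\ell\ge2,\ d\nmid\ell}(1-\ell^{-2})\cdot\prod_{\ell'\ge1}\bigl(1-\frac{1}{d^2(\ell')^2}\bigr)$. The last factor equals $\frac{\sin(\pi/d)}{\pi/d}$ by Euler's product formula. Therefore $\prod_{\ell\ge2,\ d\nmid\ell}(1-\ell^{-2})^{-1}=\frac{2d\sin(\pi/d)}{\pi}$, which is precisely the stated constant. This is the paper's argument. As a by-product, it shows that your alternating tail equals $-\frac{\pi}{2d}\sum_{n>j}h_n(y)<0$. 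So your main route was true, but it needed exactly this input.
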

\begin{proof}
Note that $f(z)$ is holomorphic in a neighborhood of $0$ and $f(0)=1$. Moreover $f(z)$ is an even function, hence \eqref{serie} makes sense. By the identity
$$e^{uz}+e^{(u-1)z} + \ldots+e^{-uz}=\sum\limits_{k=0}^{d-1}e^{(u-k)z}=\frac{\sinh(\frac{dz}{2})}{\sinh(\frac{z}{2})}$$
we get that
\begin{equation}
\label{3}
f(z)=\frac{d\sinh(\frac{z}{2})}{\sinh(\frac{dz}{2})}.
\end{equation}
By Euler's product formula (that can be obtained from \cite[(24), Chapt. 5, \S 2.3]{ahl} by replacing $\sinh(z)=-i\sin(iz)$),
$$\sinh(z)=z\prod\limits_{\ell \ge 1}\left(1+\frac{z^2}{\pi^2 \ell^2}\right)$$
we get from \eqref{3}, that
\begin{equation}
\label{4}
f(z)=\frac{d\sinh(\frac{z}{2})}{\sinh(\frac{dz}{2})}=\frac{\frac{dz}{2}\prod\limits_{\ell \ge 1}\left(1+\frac{z^2}{4\pi^2\ell^2}\right)}{\frac{dz}{2}\prod\limits_{\ell \ge 1}\left(1+\frac{d^2z^2}{4\pi^2\ell^2}\right)}=\prod\limits_{\substack{\ell \ge 1 \\ d \nmid \ell}}\left(1+\frac{d^2z^2}{4\pi^2\ell^2}\right)^{-1}.
\end{equation}
Now, it follows by Lemma \ref{els} that
\begin{equation}
\label{5}
\frac{b_j}{(2j)!}=q^j h_j
\end{equation}
where $h_j:=h_j(\frac{1}{\ell^2}, d \nmid \ell, \ell \ge 1, \ldots)$ is the series of degree $j$ in the positive variables $\frac{1}{\ell^2}, \ell \ge 1, d \nmid \ell$.

For the lower bound in \eqref{bd}, just observe that $h_j \ge 1+\frac{1}{4}=\frac{5}{4}$, which gives, using \eqref{5}, that $\frac{b_j}{(2j)!}\ge \frac{5}{4} q^j$.

For the upper bound in \eqref{bd}, using \cite[Chapt. 5, Ex.1, \S 2.2 and (24), \S 2.3]{ahl}, we have
$$\frac{1}{2}=\prod_{\ell \ge2} \left(1-\frac{1}{\ell^2}\right)=\prod_{\substack{\ell\ge2 \\ d \nmid \ell}} \left(1-\frac{1}{\ell^2}\right)\prod_{\ell' \ge 1} \left(1-\frac{1}{d^2(\ell')^2}\right)=\prod_{\substack{\ell\ge2 \\ d \nmid \ell}} \left(1-\frac{1}{\ell^2}\right) \frac{\sin(\frac{\pi}{d})}{\frac{\pi}{d}}$$
hence
\begin{equation}
\label{6}
\prod_{\substack{\ell\ge2 \\ d \nmid \ell}} \left(1-\frac{1}{\ell^2}\right)=\frac{\pi}{2d\sin(\frac{\pi}{d})}.
\end{equation}
Set now $x_{\ell}=\frac{1}{\ell^2}$ for $\ell \ge 2$ with $d \nmid \ell$ in Lemma \ref{els}. Since $\frac{1}{\ell^2} \le \frac{1}{4}$, we can set $M=\frac{1}{4}$ in Lemma \ref{els} and therefore the convergence holds, for example, for $|z| \le 3$. Then \eqref{mac} gives
\begin{equation}
\label{7}
\prod_{\substack{\ell\ge2 \\ d \nmid \ell}} \left(1-\frac{1}{\ell^2}\right)^{-1} = \prod_{\substack{\ell\ge2 \\ d \nmid \ell}}(1-x_{\ell})^{-1}= G(1)=\sum_{n \ge 0} h_n(\frac{1}{\ell^2}, d \nmid \ell, \ell \ge 2, \ldots).
\end{equation}
On the other hand, using \eqref{6} and \eqref{7}, we see that
$$h_j \le \sum_{n \ge 0} h_n(\frac{1}{\ell^2}, d \nmid \ell, \ell \ge 2, \ldots)=\prod_{\substack{\ell\ge2 \\ d \nmid \ell}} \left(1-\frac{1}{\ell^2}\right)^{-1}=\frac{2d\sin(\frac{\pi}{d})}{\pi}.$$
Therefore \eqref{5} gives that
$$\frac{b_j}{(2j)!}=q^j h_j \le \frac{2\sin(\frac{\pi}{d})}{\frac{\pi}{d}} q^j$$
and the lemma is proved.
\end{proof}

\section{$\Q$-twisted vector bundles and their Chern character}

We recall the notion of $\Q$-twisted vector bundle (see for example \cite[\S 6.2 and 8.1]{laz}). In the definition below we use the well-known notation $\binom{\ell}{m}=\frac{\ell (\ell-1)\ldots (\ell-m+1)}{m!} \ \mbox{for} \ m, \ell \in \Z, m \ge 1$.

\begin{defi}
Let $X$ be a smooth projective variety of dimension $n$. A {\it $\Q$-twisted vector bundle} $\E\langle\delta\rangle$ on $X$ is a pair $(\E, \delta)$ where $\E$ is a rank $r$ bundle on $X$ and $\delta \in N^1(X)_{\Q}$.

The Chern classes of $\E\langle\delta\rangle$ are
$$c_i(\E\langle\delta\rangle)=\sum\limits_{k=0}^i \binom{r-k}{i-k}c_k(\E)\delta^{i-k} \in H^{2i}(X; \Q)$$
where we view $\delta \in H^2(X; \Q)$.

The Chern character of $\E\langle\delta\rangle$ is $ch(\E\langle\delta\rangle)=ch(\E)e^{\delta}$. For $0 \le k \le n$, we denote by $ch_k(\E\langle\delta\rangle)$ its degree $k$ part.
\end{defi}

\begin{remark}
\label{dopor}
The definition of Chern classes of $\E\langle\delta\rangle$ \cite[Def.~8.1.1]{laz} comes of course from the usual formula for $c_i(\E \otimes L)$, where $L$ is a line bundle. Moreover, as in the case of a line bundle, we observe that $c_i(\E\langle\delta\rangle)=0$ when $i>r$: Indeed 
$$c_i(\E\langle\delta\rangle)=\sum\limits_{k=0}^r \binom{r-k}{i-k}c_k(\E)\delta^{i-k}+\sum\limits_{k=r+1}^i \binom{r-k}{i-k}c_k(\E)\delta^{i-k}.$$
Now, in the second sum we have that $c_k(\E)=0$, while in the first sum we have that 
$$\binom{r-k}{i-k}=\frac{(r-k)(r-k-1)\ldots (r-i+1)}{(i-k)!}=0.$$
\end{remark}
%\section{Newton powers and Chern character}

We now give a simple result about Newton power sums of $\Q$-twisted vector bundles.

\begin{defi}
Let $X$ be a smooth irreducible variety and let $\E\langle\delta\rangle$ be a $\Q$-twisted vector bundle on $X$. For each $k \ge 0$, {\it the $k$-th Newton power sum of $\E\langle\delta\rangle$} is $p_k(\E\langle\delta\rangle) = k! \ch_k(\E\langle\delta\rangle)$.
\end{defi}

We have the following

\begin{lemma}
Let $X$ be a smooth irreducible variety and let $\E\langle\delta\rangle$ be a rank $r$ $\Q$-twisted vector bundle on $X$. Then, for every $k \ge 1$, 
\begin{equation}
\label{9}
k\,c_k(\E\langle\delta\rangle) = \sum_{i=1}^k(-1)^{i-1}p_i(\E\langle\delta\rangle)c_{k-i}(\E\langle\delta\rangle)
\end{equation}
holds in $H^{2k}(X;\Q)$.
\end{lemma}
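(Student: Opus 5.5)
The identity \eqref{9} is Newton's identity relating elementary symmetric functions and power sums. I plan to reduce it to the corresponding polynomial identity through the splitting principle. The $\Q$-twist needs no special geometric treatment: both sides are given by universal polynomial formulas in $c_1(\E),\dots,c_r(\E)$ and $\delta$ with rational coefficients. So it is enough to check an identity in a polynomial ring.

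\textbf{Step 1: universal setup.} Introduce formal variables $\alpha_1,\dots,\alpha_r$ (the Chern roots of $\E$) and $t$ (playing the role of $\delta$), and set $\beta_j=\alpha_j+t$. I will show three things.

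First, $c_i(\E\langle\delta\rangle)$ is obtained by substituting into $e_i(\beta_1,\dots,\beta_r)$. Indeed, expanding
$$\prod_j(1+\alpha_j+t)=\sum_k e_k(\alpha)(1+t)^{r-k}$$
and taking the degree $i$ part gives exactly $\sum_k\binom{r-k}{i-k}e_k(\alpha)t^{i-k}$, which is the defining formula for $c_i(\E\langle\delta\rangle)$.

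Second, $ch(\E)e^{\delta}$ corresponds to $\sum_j e^{\alpha_j}e^{t}=\sum_j e^{\beta_j}$. Hence $k!\,ch_k$ corresponds to the power sum $P_k(\beta)=\sum_j\beta_j^k$.

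Third, since $ch(\E)$ is a universal polynomial in the $c_i(\E)$, namely $ch_k(\E)=P_k(\alpha)/k!$ rewritten in elementary symmetric functions of $\alpha$, all of these correspondences hold under the ring homomorphism
$$\Q[e_1(\alpha),\dots,e_r(\alpha),t]\to H^{*}(X;\Q),\qquad e_i(\alpha)\mapsto c_i(\E),\quad t\mapsto\delta.$$

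\textbf{Step 2: Newton's identity.} In $\Q[\beta_1,\dots,\beta_r]$, the classical identity
$$k\,e_k=\sum_{i=1}^k(-1)^{i-1}P_ie_{k-i}$$
holds for every $k\ge1$, including $k>r$, where $e_k=0$. To prove it, take the logarithmic derivative of
$$E(z)=\prod_j(1+\beta_jz).$$
This gives
$$zE'(z)=E(z)\sum_{i\ge1}(-1)^{i-1}P_iz^i,$$
and comparing the coefficients of $z^k$ yields the identity.

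Every quantity involved is symmetric in the $\beta_j$, hence a polynomial in $e(\alpha)$ and $t$. The identity therefore holds in $\Q[e(\alpha),t]$, and applying the homomorphism from Step 1 gives \eqref{9} in $H^{2k}(X;\Q)$. For $k>r$ this is consistent with Remark \ref{dopor}, which says $c_k(\E\langle\delta\rangle)=0$.

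\textbf{Main obstacle.} The only delicate point is Step 1: making sure the binomial formula defining $c_i(\E\langle\delta\rangle)$ really matches $e_i(\alpha+t)$ for every $i$, including $i>r$, where the generalized binomial coefficients vanish. The binomial expansion above settles this. Everything else is formal.
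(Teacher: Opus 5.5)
Your proposal is correct and is essentially the paper's argument. Both identify $c_k(\E\langle\delta\rangle)$ and $p_k(\E\langle\delta\rangle)$ with the elementary symmetric functions and power sums of the shifted roots $\alpha_j+\delta$, then derive Newton's identity by comparing coefficients in $zE'(z)=E(z)\sum_{i\ge1}(-1)^{i-1}P_iz^i$ with $E(z)=\prod_j(1+\beta_jz)$. The only difference is in formalization: you work in a universal polynomial ring and check the binomial formula for the twisted Chern classes directly, whereas the paper uses the splitting principle and cites Lazarsfeld for the shifted Chern roots.
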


\begin{proof}
By the splitting principle, it is enough to prove the identity after pulling back to a space on which $\E$ splits as a direct sum of line bundles. If $x_1,\dots, x_r$ are the Chern roots of $\E$, then, as in \cite[Proof of Lemma 8.1.2]{laz}, the Chern roots of $\E\langle\delta\rangle$ are $y_i:=x_i+\delta, 1 \le i \le r$. Therefore
$$c_k(\E\langle\delta\rangle)=e_k(y_1,\dots,y_r)$$
where $e_k$ is the $k$-th elementary symmetric polynomial, and
$$p_i(\E\langle\delta\rangle)=y_1^i+\cdots + y_r^i.$$
Indeed,
$$\ch(\E\langle\delta\rangle)=ch(\E)e^{\delta}=\big(\sum_{j=1}^r e^{x_j}\big)e^{\delta}=\sum_{j=1}^r e^{y_j} = \sum_{j=1}^r\sum_{i \ge 0}\frac{y_j^i}{i!}$$
so that $p_i(\E\langle\delta\rangle)=i! \ch_i(\E\langle\delta\rangle)=\sum_{j=1}^r y_j^i$. Now set
$$C(t)=\prod_{j=1}^r(1+y_j t)=\sum_{k\ge 0}c_k(\E\langle\delta\rangle)t^k$$
where the sum is clearly a finite sum. Then
$$\frac{C'(t)}{C(t)}=\sum_{j=1}^r\frac{y_j}{1+y_j t}.$$
Expanding each summand and observing that all sums below are finite, gives
$$\frac{y_j}{1+y_j t}=\sum_{a\ge 0}(-1)^a y_j^{a+1}t^a.$$
Hence
$$\frac{C'(t)}{C(t)}=\sum_{a\ge 0}(-1)^a\left(\sum_{j=1}^r y_j^{a+1}\right)t^a=\sum_{i \ge 1}(-1)^{i-1}p_i(\E\langle\delta\rangle)t^{i-1}.$$
Multiplying by $tC(t)$, we obtain
$$tC'(t)=C(t) \sum_{i \ge 1}(-1)^{i-1}p_i(\E\langle\delta\rangle)t^i.$$
Now
$$tC'(t)=\sum_{k\ge 1}k\,c_k(\E\langle\delta\rangle)t^k$$
while
$$C(t)\sum_{i \ge 1}(-1)^{i-1}p_i(\E\langle\delta\rangle)t^i=\left(\sum_{a\ge 0}c_a(\E\langle\delta\rangle)t^a\right)
\left(\sum_{i \ge 1}(-1)^{i-1}p_i(\E\langle\delta\rangle)t^i \right).$$
Comparing the coefficient of $t^k$, we get
$$k\,c_k(\E\langle\delta\rangle)=\sum_{i=1}^k(-1)^{i-1}p_i(\E\langle\delta\rangle)c_{k-i}(\E\langle\delta\rangle).$$
\end{proof}

\section{Proof of Theorem \ref{main}}

We first calculate the Chern character of an Ulrich vector bundle on a hypersurface.

\begin{lemma}
\label{ch}
Let $X \subset \P^{n+1}$ be a smooth hypersurface of dimension $n \ge 3$ and degree $d \ge 3$. Assume that either $n$ is odd, or $n$ is even and $X$ is very general. Let $\E$ be a rank $r$ Ulrich bundle on $X$. Then, if $H \in H^2(X; \Q)$ is the class of a hyperplane section, we have
$$\ch(\E)=\frac{rd}{1+e^{-H}+\ldots+e^{-(d-1)H}}.$$
\end{lemma}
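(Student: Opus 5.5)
The plan is to use that an Ulrich bundle $\E$ on $X$ has a linear resolution on $\P^{n+1}$. Letting $i:X\hookrightarrow \P^{n+1}$ be the inclusion, Eisenbud--Schreyer \cite{es} give
$$0 \to \O_{\P^{n+1}}(-1)^{rd} \to \O_{\P^{n+1}}^{rd} \to i_*\E \to 0.$$
(A matrix factorization gives this: the Ulrich sheaf $i_*\E$ is generated by $rd=h^0(\E)$ sections, and its syzygy is a sum of $\O(-1)$'s.) Hence
$$\ch(i_*\E)=rd\,(1-e^{-h}),$$
where $h$ is the hyperplane class on $\P^{n+1}$.

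Next I would pass from $\P^{n+1}$ to $X$ via Grothendieck--Riemann--Roch for $i$. This reads
$$i_*\bigl(\ch(\E)\,\td(T_X)\bigr)=\ch(i_*\E)\,\td(T_{\P^{n+1}}).$$
Since $\td(T_{\P^{n+1}})|_X=\td(T_X)\,\td(N_X)$ and $N_X=\O_X(d)$, the projection formula gives
$$i_*\bigl(\ch(\E)\bigr)=rd\,(1-e^{-h})\,\frac{dh}{1-e^{-dh}},$$
after cancelling the invertible class $\td(T_{\P^{n+1}})$. The right-hand side equals $dh$ times
$$\frac{rd(1-e^{-h})}{1-e^{-dh}}=\frac{rd}{1+e^{-h}+\dots+e^{-(d-1)h}}.$$
Also $i_*(H^k)=d\,h^{k+1}$, so the candidate class $\frac{rd}{1+e^{-H}+\dots+e^{-(d-1)H}}$ on $X$ pushes forward to exactly the same class. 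Thus $\ch(\E)$ and the candidate have equal images under $i_*$.

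The main obstacle is injectivity of $i_*$ on the relevant cohomology, i.e.\ knowing that $\ch_k(\E)$ is a rational multiple of $H^k$. Here I would use Lefschetz. For $2k<n$, $H^{2k}(X;\Q)=\Q H^k$. For $2k>n$, $H^{2k}(X;\Q)\cong H^{2k+2}(\P^{n+1};\Q)$ via Gysin, so it is again spanned by the restriction of a power of $h$, and $i_*$ is injective there. If $n$ is odd, the middle cohomology $H^n$ has odd degree and contains no Chern character components. If $n$ is even, the middle degree $2k=n$ is the problem. There $\ch_{n/2}(\E)$ is an algebraic (Hodge) class, and for very general $X$ (Noether--Lefschetz type results for $d\ge3$, $n\ge 4$ even, or Deligne's theorem on the monodromy for very general hypersurfaces) the algebraic classes in $H^n(X;\Q)$ are just $\Q H^{n/2}$. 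So $\ch_{n/2}(\E)=aH^{n/2}$, and $i_*$ is injective on this line because $i_*H^{n/2}=dh^{n/2+1}\neq0$. For $n=3$ the odd middle degree again causes no issue.

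Combining these, in every degree both sides lie in the span of $H^k$, where $i_*$ is injective. Hence $\ch(\E)$ equals the stated class.
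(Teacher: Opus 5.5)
Your proof is correct and follows essentially the same route as the paper. Both use the linear resolution of $i_*\E$, Grothendieck--Riemann--Roch for the embedding with $N_X=\O_X(d)$, and Lefschetz together with the very general hypothesis (algebraic classes in $H^n(X;\Q)$ are multiples of $H^{n/2}$) to place $\ch(\E)$ in the span of powers of $H$. The only difference is cosmetic: you conclude via injectivity of $i_*$ on each line $\Q H^k$, $0\le k\le n$, whereas the paper lifts $\ch(\E)$ to a class $\beta$ on $\P^{n+1}$ and solves $(1-e^{-dh})\beta=rd(1-e^{-h})$. Your phrasing has the small advantage of never formally dividing by the non-invertible class $1-e^{-dh}$.
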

\begin{proof}
Let $ i : X \hookrightarrow \P^{n+1}$ be the inclusion. Let $h \in H^2(\P^{n+1};\Q)$ be the class of a hyperplane, so that $i^*h=H$. As is well known (see for example \cite[Prop.~2.1(ii)]{b}), the sheaf $i_*\E$ has a linear resolution on $\P^{n+1}$ of the form
$$0 \to \O_{\P^{n+1}}(-1)^{\oplus (rd)} \to \O_{\P^{n+1}}^{\oplus (rd)} \to i_*\E \to 0$$
hence $\ch(i_*\E)=rd(1-e^{-h}) \in H^*(\P^{n+1};\Q)$. Using the latter and Grothendieck--Riemann--Roch's theorem for a closed embedding \cite[\S 15.2, page 288]{f}, we find that
\begin{equation}
\label{1}
rd(1-e^{-h})=\ch(i_*\E)= i_*\left(\td(\O_X(dH))^{-1} \cdot \ch(\E)\right)=i_*\left(\frac{1-e^{-dH}}{dH} \cdot \ch(\E)\right).
\end{equation}
Under the given hypotheses on $X$, we have that $i^*: H^{2k}(\P^{n+1}; \Q) \cong \Q h^k \to H^{2k}(X; \Q) \cong \Q H^k$ is an isomorphism for $k \ne \frac{n}{2}$ (in particular for odd $n$), while for even $n$ and $k =\frac{n}{2}$, any algebraic class in $H^n(X; \Q)$ is of type $aH^{\frac{n}{2}}$ (see for example \cite[Lemma 4.1]{lr2}), hence in particular so is $\ch_{\frac{n}{2}}(\E)$. Therefore, there is $\beta \in H^*(\P^{n+1};\Q)$ such that $\ch(\E)=i^*\beta$. Setting $\alpha = \frac{1-e^{-dh}}{dh}$, we have that $i^* \alpha = \frac{1-e^{-dH}}{dH}$. Therefore \eqref{1} becomes, using $i_*(1)=[X]=dh$ and the push-pull formula,
$$rd(1-e^{-h})=i_*\left(i^*\alpha \cdot i^*\beta)\right)=i_*\left(1 \cdot i^*(\alpha \cdot \beta)\right)=dh \alpha \beta= (1-e^{-dh})\beta.$$
Therefore $\beta = \frac{rd(1-e^{-h})}{1-e^{-dh}}$ and then 
$$\ch(\E)=i^*\beta = \frac{rd(1-e^{-H})}{1-e^{-dH}}=\frac{rd}{1+e^{-H}+\ldots+e^{-(d-1)H}}.$$
\end{proof}

We are now ready for the proof of Theorem \ref{main}.
\begin{proof}
Suppose to begin with that either $n$ is odd, or $n$ is even and $X$ is very general, so that  Lemma \ref{ch} applies. Now, assume that there is a rank $r$ Ulrich bundle $\E$ on $X$, set $u=\frac{d-1}{2}$ and set 
$$\F=\E\langle-uH\rangle.$$
Then, applying Lemma \ref{ch}, we have
$$\ch(\F) =\ch(\E)e^{-uH}=\frac{rde^{-uH}}{1+e^{-H}+\ldots+e^{-2uH}}$$
hence
\begin{equation}
\label{8}
\ch(\F)=\frac{rd}{e^{uH}+e^{(u-1)H} + \ldots+e^{-uH}}.
\end{equation}
Now observe that the identity
$$d=\left(e^{uz}+\ldots+e^{-uz}\right)\left(1+\sum_{j \ge 1}(-1)^j b_j \frac{z^{2j}}{(2j)!}\right)$$
holds by Lemma \ref{mcl}, hence we have that the identity
$$d=\left(e^{uH}+\ldots+e^{-uH}\right)\left(1+\sum_{j \ge 1}(-1)^j b_j \frac{H^{2j}}{(2j)!}\right)$$
also holds as product of formal power series. Therefore, setting $b_0=1$, \eqref{8} gives that
$$\ch(\F)=\sum_{j \ge 0}(-1)^j rb_j \frac{H^{2j}}{(2j)!}$$
and then we see that, for all $j \ge 0$, 
\begin{equation}
\label{10}
p_{2j+1}(\F)=0,\ \ p_{2j}(\F)=(-1)^j rb_jH^{2j}.
\end{equation}
For $0 \le k \le n$, define the intersection numbers
$$I_k =c_k(\F)H^{n-k}$$
so that, in particular, $I_0=H^n=d$. Setting $k=2s$ in \eqref{9}, intersecting with $H^{n-2s}$ and using \eqref{10}, we find, for all $0 \le s \le \frac{n}{2}$, that
\begin{equation}
\label{11}
2sI_{2s}=\sum_{i=1}^{2s}(-1)^{i-1} p_i(\F)c_{2s-i}(\F)H^{n-2s}=\sum_{j=1}^s(-1)^{j+1}rb_jI_{2s-2j}.
\end{equation}
We shall prove the following purely numerical fact.
\begin{claim}
\label{finale}
Let $m \in \Z$ be such that $3 \le m \le \frac{n}{2}$ and assume that $r < F(m)$. Then
$$(-1)^{m-1}I_{2m}>0.$$
\end{claim}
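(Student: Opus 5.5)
The plan is to rescale the recursion \eqref{11} so that the claim becomes a statement about the sign of a single sequence. Set $J_s=(-1)^sI_{2s}$ for $0\le s\le \frac n2$. Multiplying \eqref{11} by $(-1)^s$ turns it into
$$J_0=d,\qquad J_s=-\frac{r}{2s}\sum_{j=1}^s b_jJ_{s-j}\quad (s\ge 1).$$
The claim then reads $J_m<0$ for $3\le m\le \frac n2$ whenever $r<F(m)$. From the recursion,
$$J_m=-\frac{r}{2m}\Bigl(b_md+\sum_{j=1}^{m-1}b_jJ_{m-j}\Bigr),$$
so it suffices to prove
$$\sum_{j=1}^{m-1}b_j|J_{m-j}|<b_md.$$

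The heuristic is that for $r$ small compared with $m$ the term $b_md$ dominates. By Lemma \ref{mcl}, $b_j\approx (2j)!\,q^j$ grows factorially in $j$. Products $b_jb_{m-j}$ are therefore smaller than $b_m$ by roughly $\binom{2m}{2j}^{-1}$, while the coefficient $\frac r{2s}$ is at most about $1$ when $r<2m$. To make this rigorous I would prove by strong induction on $s$ an a priori bound of the form
$$|J_s|\le K\,\frac{r}{2s}\,b_s\,d\qquad(1\le s\le m-1),$$
with $K$ an explicit constant depending only on $d$. The base cases are $J_1=-\frac r2b_1d$ and direct computation of $J_2$.

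The main tool is the two-sided estimate \eqref{bd}. Writing $c=\frac{2\sin(\pi/d)}{\pi/d}$, it gives
$$b_jb_{m-j}\le \frac{c^2}{(5/4)}\,\frac{(2j)!\,(2m-2j)!}{(2m)!}\,b_m.$$
Inserting the inductive bound, the error sum is at most
$$b_md\sum_{j=1}^{m-1}\frac{Kc^2r}{\frac54(2m-2j)}\binom{2m}{2j}^{-1}.$$
The terms $j=1$ and $j=m-1$ dominate, since the middle binomials are much larger. They contribute quantities of size $\frac{c^2r}{(2m-2)\,2m(2m-1)}$ and $\frac{c\,r}{2}\cdot\frac{b_1b_{m-1}}{b_m}$, which is where factors like $(2m-1)(2m-3)$, $d^2\sin^2(\pi/d)$ and $d\sin(\pi/d)$ in $B(m)$ should come from. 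The remaining middle terms are bounded by a crude geometric estimate on $\sum_j\binom{2m}{2j}^{-1}$ for $m\ge3$. Requiring the total to be $<1$ should give exactly a condition of the form $r<B(m)$. The condition $r<2m$ is what closes the induction: it keeps $\frac r{2s}$ under control and keeps $K$ stable.

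The main obstacle is this bookkeeping. The inductive constant has to propagate consistently for all $s<m$. Since $B$ is defined with $m$ in place, I would run the inductive bound with the $r$ fixed at level $m$ and use $r<F(m)\le 2m$ and $r<B(m)$. I would first try to separate the two extreme terms $j=1$ and $j=m-1$ exactly, using the sharper estimate $b_1=\frac{d^2-1}{12}$ for $j=1$, so that the precise constants of $B(m)$ come out. Only the tail would then be bounded crudely. If the constants do not match, I would treat $J_{m-1}$ separately via its own recursion.
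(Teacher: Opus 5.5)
\textbf{The reduction is correct, but the inductive hypothesis is false.} With $J_s=(-1)^sI_{2s}$ the recursion \eqref{11} becomes $J_0=d$, $J_s=-\frac{r}{2s}\sum_{j=1}^s b_jJ_{s-j}$. The claim is then $J_m<0$, and it suffices to show $\sum_{j=1}^{m-1}b_j|J_{m-j}|<b_md$. The gap is your hypothesis $|J_s|\le K\frac{r}{2s}b_sd$ for all $1\le s\le m-1$ with $K$ depending only on $d$.

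The rank $r$ is constrained only at the top level, by $r<F(m)$, and $F(m)$ is unbounded. For $d=3$ one has $F(m)=2m$ for every $m\ge 3$. So at low levels $r$ can be much larger than $s$. But $J_s$ is a polynomial of degree $s$ in $r$ with leading term $\frac{d}{s!}\bigl(-\frac{rb_1}{2}\bigr)^s$, so it cannot be bounded by a fixed multiple of $r\,b_s$. Your own base case shows this. For $d=3$ one has $b_1=\frac23$, $b_2=2$, $b_3=14$, and
\[
J_2=-\frac r4\Bigl(6-\frac{2r}{3}\Bigr),\qquad \frac{|J_2|}{\frac r4\, b_2\, d}=\Bigl|1-\frac r9\Bigr| .
\]
This is unbounded along $r=2m-1<F(m)$ as $m\to\infty$, and $J_2$ even has the opposite sign once $r>9$.

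Running your induction step with these values, at $s=3$ you would need $K\bigl(1-\frac{r}{42}\bigr)\ge 1+\frac{r}{21}$. This is impossible for every $K$ once $r\ge 42$, which is admissible for $m\ge 22$. So the remark that $r<2m$ ``keeps $\frac r{2s}$ under control and keeps $K$ stable'' is exactly where the plan breaks. The bound $r<2m$ controls $\frac{r}{2s}$ only for $s$ comparable to $m$, and no induction on $s$ with a single constant can close. The fallback of treating $J_{m-1}$ separately does not help, because the failure is at small $s$.

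\textbf{What is missing} is a way to control the lower levels using only the hypothesis at level $m$. The paper gets this by solving the recursion in closed form:
\[
\sum_{s\ge0}J_st^s=d\exp\Bigl(-r\sum_{j\ge1}\frac{b_j}{2j}t^j\Bigr).
\]
Thus $J_m$ is $d$ times a signed sum over partitions of $m$. The one-part partition gives the main term $-\frac{rb_m}{2m}d$. The other partitions, grouped by length $\ell\ge2$, are bounded using \eqref{bd} and $\prod_a(2i_a-1)!\le(2m-2\ell+1)!$ by $U_\ell$ times the main term. Here $U_{\ell+1}/U_\ell\le\frac1x<1$ is where $r<2m$ enters, and $\frac{U_2x}{x-1}<1$ is equivalent to $r<B(m)$.

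This fits your reduction directly. Define the majorant $\tilde J_0=d$, $\tilde J_s=\frac{r}{2s}\sum_{j=1}^s b_j\tilde J_{s-j}$, so that $|J_s|\le\tilde J_s$. Then your sufficient condition $\sum_{j=1}^{m-1}b_j\tilde J_{m-j}<b_md$ is equivalent to $\sum_{\ell\ge2}T_\ell<M$, which is precisely what the paper proves. The majorant correctly allows growth like $r^s$ at small $s$.

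Finally, the claim is asserted under exactly $r<F(m)$. Hoping that the constants of $B(m)$ ``should come out'' of the $j=1$ and $j=m-1$ terms is not enough: your estimate must be proved under that precise hypothesis.
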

\begin{proof}
For all $0 \le s \le \frac{n}{2}$, set
\begin{equation}
\label{j}
J_s =\frac{I_{2s}}{d}
\end{equation}
so that $J_0=1$ and \eqref{11} becomes
$$2sJ_s=\sum_{j=1}^{s}(-1)^{j+1}rb_jJ_{s-j}$$
that is
\begin{equation}
\label{12}
sJ_s=\sum_{j=1}^{s}jA_jJ_{s-j},
\end{equation}
where
\begin{equation}
\label{13}
A_j:=(-1)^{j+1}\frac{rb_j}{2j}.
\end{equation}
We will now prove that
\begin{equation}
\label{14}
J_s=\sum_{\lambda\vdash s}\left(\prod_{j\ge1}\frac{A_j^{s_j}}{s_j!}\right)
\end{equation}
where $\lambda=(1^{s_1}2^{s_2}\cdots)$ runs over all partitions of $s \ge 0$ such that
$$\sum_{j\ge1}js_j=s.$$
First, note that the product on the right-hand side of \eqref{14} is finite. To see \eqref{14}, define, for every $s \ge 0$, 
$$R_0=1 \ \hbox{and} \ R_s=\sum_{\lambda\vdash s}\left(\prod_{j\ge1}\frac{A_j^{s_j}}{s_j!}\right).$$
Consider the generating function
$$R(t)=\sum\limits_{s \ge 0} R_s t^s.$$
We have
\begin{equation}
\label{coe}
R(t)  = \sum\limits_{s \ge 0} \sum\limits_{\substack{s_1 \ge 0, s_2 \ge 0, \ldots \\ \sum_{j\ge1}js_j=s}}\left(\prod_{j\ge1}\frac{A_j^{s_j}}{s_j!}t^s\right)=\sum\limits_{\substack{(s_1, s_2, \ldots), s_i \ge 0 \\ \ \hbox{with finite support}}}\left(\prod_{j\ge1}\frac{(A_jt^j)^{s_j}}{s_j!}\right).
\end{equation}
Consider the infinite product of formal power series $\prod_{j\ge1} (1+F_j(t))$
where
$$F_j(t)=\sum\limits_{s_j \ge 1}\frac{(A_jt^j)^{s_j}}{s_j!}.$$
Then $F_j(0)=0$ and $\deg F_j(t) =j$ since $A_j \ne 0$ by \eqref{13} and \eqref{bd}. Hence 
$$\lim\limits_{j \to \infty} \deg F_j(t) = \infty$$ 
and $\prod_{j\ge1} (1+F_j(t))$ converges, by \cite[Prop.~1.1.9]{st}, to a formal power series whose coefficient of $t^s$ is clearly the same as the one in \eqref{coe}. This implies that, as formal power series,
$$\sum\limits_{\substack{(s_1, s_2, \ldots), s_i \ge 0 \\ \ \hbox{with finite support}}}\left(\prod_{j\ge1}\frac{(A_jt^j)^{s_j}}{s_j!}\right) = \prod_{j\ge1}\left(\sum\limits_{s_j \ge 0}\frac{(A_jt^j)^{s_j}}{s_j!}\right)= \prod_{j\ge1} e^{A_jt^j}=e^{\sum\limits_{j \ge 1}A_jt^j}$$
where the last step is possible since if $H(t)=\sum_{j \ge 1}A_jt^j$, then $H(0)=0$, hence $e^{H(t)}$ is a well-defined composition of two series \cite[\S 1.1]{st}. 
%Consider the infinite product of formal power series $\prod_{j\ge1} (1+F_j(t))$
%where
%$$F_j(t)=\sum\limits_{\substack{(s_1, s_2, \ldots), s_i \ge 0, \ \hbox{not all} \ 0 \\ \ \hbox{with finite support}}}\frac{(A_jt^j)^{s_j}}{s_j!}.$$
%Then $F_j(0)=0$ and $\deg F_j(t) =j$, hence 
%$$\lim\limits_{j \to \infty} \deg F_j(t) = \infty.$$ 
%Therefore the infinite product $\prod_{j\ge1} (1+F_j(t))$ converges to a formal power series by \cite[Prop.~1.1.9]{st}
%We observe that, in the formal power series ring, to compute the coefficient of $t^s$ in the above product,
%only finitely many $j$'s matter, namely the ones with $j \le s$. Moreover, for each $j \le s$, only finitely many values $s_j \le \frac{s}{j}$ can contribute. Hence the coefficient of $t^s$ is obtained by a finite sum. This implies that, as formal power series,
%and we have that
%$$\sum\limits_{\substack{(s_1, s_2, \ldots), s_i \ge 0 \\ \ \hbox{with finite support}}}\left(\prod_{j\ge1}\frac{(A_jt^j)^{s_j}}{s_j!}\right) = \prod_{j\ge1}\left(\sum\limits_{\substack{(s_1, s_2, \ldots), s_i \ge 0 \\ \ \hbox{with finite support}}}\frac{(A_jt^j)^{s_j}}{s_j!}\right)= \prod_{j\ge1} e^{A_jt^j}=e^{\sum\limits_{j \ge 1}A_jt^j}$$
%where the last step is possible since if $H(t)=\sum_{j \ge 1}A_jt^j$, then $H(0)=0$, hence $e^{H(t)}$ is a well-defined composition of two series \cite[\S 1.1]{st}. 
Then
$$R(t)=e^{\sum\limits_{j \ge 1}A_jt^j}$$
and therefore
$$R'(t)=R(t)\sum\limits_{j \ge 1}jA_jt^{j-1}$$
and
$$tR'(t)=R(t)\sum\limits_{j \ge 1}jA_jt^j.$$
Comparing the coefficient of $t^s$, we have, on the left-hand side $sR_s$ and, on the right-hand side,
$$\sum_{\substack{k+j=s \\ k \ge 0, j \ge 1}} R_kjA_j=\sum_{j=1}^s jA_jR_{s-j}.$$
Thus, $R_s$ satisfies the same recursion \eqref{12} and initial value as $J_s$ and therefore $R_s=J_s$, that is \eqref{14} holds.
 
Next, consider \eqref{14} for $s=m$, that is
\begin{equation}
\label{15}
J_m=\sum_{\lambda\vdash m}\prod_{j \ge 1}\frac{A_j^{m_j}}{m_j!}.
\end{equation}
The term corresponding to the partition $\lambda=(1^0, 2^0, \ldots, m^1)$ in the above is, by \eqref{13},
$$A_m=(-1)^{m+1}\frac{rb_m}{2m}.$$
Let
$$M:=\frac{rb_m}{2m}$$
so that, using \eqref{bd},
\begin{equation}
\label{24}
(-1)^{m-1}A_m=M>0.
\end{equation}
We shall prove that the sum of the absolute values of all remaining terms in \eqref{15} is strictly smaller than $M$. 

To this end, let $\ell(\lambda)=\sum_{j \ge 1}m_j$ be the length of a partition $\lambda$. For $\ell \ge2$, let 
$$T_\ell = \sum_{\substack{\lambda\vdash m \\ \ell(\lambda)=\ell}} \prod_{j \ge 1}\frac{|A_j|^{m_j}}{m_j!}$$
where we note that sum and product are finite. We get from \eqref{13} and \eqref{bd} that
\begin{equation}
\label{16}
|A_j| = \frac{rb_j}{2j}=r((2j-1)!)\frac{b_j}{(2j)!} \le r((2j-1)!)\frac{2d\sin(\frac{\pi}{d})}{\pi} q^j
\end{equation}
and
\begin{equation}
\label{17}
M=|A_m| =r((2m-1)!)\frac{b_m}{(2m)!} \ge r((2m-1)!)\frac{5}{4}q^m.
\end{equation}
Now define
$$P_{m,\ell} = \sum_{\substack{\lambda\vdash m\\ \ell(\lambda)=\ell}}\prod\limits_{j\ge1}\frac{((2j-1)!)^{m_j}}{m_j!}$$
where we note that sum and product are finite.
Using \eqref{16} and \eqref{17} we get
\begin{equation}
\label{18}
\begin{aligned}
\frac{T_\ell}{M} & = \sum_{\substack{\lambda\vdash m \\ \ell(\lambda)=\ell}} \frac{1}{M} \prod_{j \ge 1}\frac{|A_j|^{m_j}}{m_j!} \le \sum_{\substack{\lambda\vdash m \\ \ell(\lambda)=\ell}} \frac{1}{M} \prod_{j \ge 1}\frac{r^{m_j}((2j-1)!)^{m_j}\left(\frac{2d\sin(\frac{\pi}{d})}{\pi}\right)^{m_j}q^{jm_j}}{m_j!}= \\
& = \frac{r^{\ell}q^m\left(\frac{2d\sin(\frac{\pi}{d})}{\pi}\right)^{\ell}}{M} \sum_{\substack{\lambda\vdash m \\ \ell(\lambda)=\ell}} \prod_{j \ge 1}\frac{((2j-1)!)^{m_j}}{m_j!} \le \frac{r^{\ell-1} \left(\frac{2d\sin(\frac{\pi}{d})}{\pi}\right)^{\ell}}{\frac{5}{4}(2m-1)!} P_{m,\ell}.
\end{aligned}
\end{equation}
We now estimate $P_{m,\ell}$. Consider, for $\ell \le m$,
$$Q_{m, \ell}:= \frac{1}{\ell!}\sum_{\substack{i_1+\cdots+i_\ell=m \\ i_a \ge 1}}\prod_{a=1}^{\ell}(2i_a-1)!.$$
We will prove that $P_{m,\ell}=Q_{m,\ell}$. To this end, for every $\ell$-tuple $(i_1, \ldots, i_{\ell})$ such that $i_a \ge 1$ for $a \ge 1$ and $i_1+\cdots+i_\ell=m$, define, for every $j \ge 1$,
$$m_j=\#\{a \in \{1,\ldots, \ell\}: i_a=j\}$$
so that $\sum\limits_{j \ge 1}m_j=\ell, \sum\limits_{j \ge 1}jm_j=m$ and
$$\prod\limits_{a=1}^{\ell}(2i_a-1)!=\prod\limits_{j\ge1}((2j-1)!)^{m_j}.$$ 
Vice versa, for every partition $(1^{m_1}2^{m_2}\cdots)$ of $m$ of length $\ell$, there are $\frac{\ell!}{\prod_{j \ge 1}m_j!}$ possibilities for $i_1+\cdots+i_\ell=m$ with all $ i_a \ge 1$. Hence
$$Q_{m, \ell}=\frac{1}{\ell!}\sum_{\substack{i_1+\cdots+i_\ell=m \\ i_a \ge 1}}\prod_{a=1}^{\ell}(2i_a-1)!= \frac{1}{\ell!} \sum_{\substack{\sum_j jm_j=m \\ \sum_j m_j = \ell}} \frac{\ell!}{\prod_{j \ge 1}m_j!}\prod_{j \ge 1}((2j-1)!)^{m_j}=P_{m,\ell}.$$
For positive integers $a, b$, one has 
$$a!b! \le (a+b-1)!.$$ 
Applying this repeatedly gives 
$$\prod_{a=1}^{\ell}(2i_a-1)! \le(2m-2\ell+1)!.$$ 
Therefore
$$P_{m,\ell}=Q_{m, \ell}=\frac{1}{\ell!}\sum_{\substack{i_1+\cdots+i_\ell=m\\ i_a\ge1}}\prod_{a=1}^{\ell}(2i_a-1)! \le \frac{1}{\ell!}\sum_{\substack{i_1+\cdots+i_\ell=m\\ i_a\ge1}} (2m-2\ell+1)! = \frac{1}{\ell!} \binom{m-1}{\ell-1}(2m-2\ell+1)!.$$
Now, using \eqref{18}, we deduce that
$$\frac{T_\ell}{M} \le \frac{r^{\ell-1} \left(\frac{2d\sin(\frac{\pi}{d})}{\pi}\right)^{\ell}}{\frac{5}{4}(2m-1)!} \frac{1}{\ell!} \binom{m-1}{\ell-1}(2m-2\ell+1)!.$$
Define the right-hand side above to be
$$U_\ell=\frac{r^{\ell-1} \left(\frac{2d\sin(\frac{\pi}{d})}{\pi}\right)^{\ell}}{\frac{5}{4}(2m-1)!} \frac{1}{\ell!} \binom{m-1}{\ell-1}(2m-2\ell+1)!.$$
Then 
\begin{equation}
\label{19}
\frac{T_\ell}{M} \le U_\ell.
\end{equation} 
For $\ell=2$, we compute 
\begin{equation}
\label{22} 
U_2=\frac{r \left(\frac{2d\sin(\frac{\pi}{d})}{\pi}\right)^2}{5(2m-1)}
\end{equation} 
while, for $2 \le l \le m-1$,
$$\frac{U_{\ell+1}}{U_\ell}=\frac{r \left(\frac{2d\sin(\frac{\pi}{d})}{\pi}\right)}{2\ell (\ell+1)(2m-2\ell+1)}.$$
Since $\ell (\ell+1)(2m-2\ell+1) \ge 6(2m-3)$ for $2 \le \ell \le m-1$, we get that
\begin{equation}
\label{20}
\frac{U_{\ell+1}}{U_\ell} \le \frac{rd\sin(\frac{\pi}{d})}{6\pi(2m-3)}.
\end{equation}
Set 
$$x=\frac{6\pi(2m-3)}{rd\sin(\frac{\pi}{d})}$$
so that \eqref{20} gives
\begin{equation}
\label{21}
\frac{U_{\ell+1}}{U_\ell} \le \frac{1}{x}.
\end{equation} 
Observe that  $x > 1$: since $r < F(m)$ we have in particular that $r < 2m \le \frac{6\pi(2m-3)}{d\sin(\frac{\pi}{d})}$. Combining \eqref{21} with \eqref{19} we have
\begin{equation}
\label{23}
\sum_{\ell=2}^m \frac{T_\ell}{M} \le \sum_{\ell=2}^m U_\ell \le U_2 \sum_{\ell=2}^m \frac{1}{x^{\ell-2}} \le U_2 \sum_{k \ge 0} \frac{1}{x^k} = \frac{U_2 x}{x-1}<1
\end{equation} 
the latter because we have from \eqref{22} that
$$U_2=\frac{r \left(\frac{2d\sin(\frac{\pi}{d})}{\pi}\right)^2}{5(2m-1)}<\frac{x-1}{x}=\frac{6\pi(2m-3)-rd\sin(\frac{\pi}{d})}{6\pi(2m-3)}$$
if and only if
$$r < \frac{30\pi^2(2m-1)(2m-3)}{24d^2(2m-3)\sin^2(\frac{\pi}{d})+5\pi d(2m-1)\sin(\frac{\pi}{d})}=B(m)$$
and the above holds by the hypothesis $r < F(m)$. It follows from \eqref{23} that
$$\sum_{\ell=2}^m T_\ell<M.$$
In the expansion \eqref{15},
$$J_m=\sum_{\lambda\vdash m}\prod_{j \ge 1}\frac{A_j^{m_j}}{m_j!}=A_m+\sum_{\substack{\lambda\vdash m \\ \ell(\lambda) \ge 2}} \prod_{j \ge 1}\frac{A_j^{m_j}}{m_j!}=A_m+\sum\limits_{\ell \ge 2} \sum\limits_{\substack{\lambda\vdash m \\ \ell(\lambda) = \ell}} \prod_{j \ge 1}\frac{A_j^{m_j}}{m_j!}$$
we have that $|A_m|=M>0$, while
$$|\sum\limits_{\ell \ge 2} \sum\limits_{\substack{\lambda\vdash m \\ \ell(\lambda) = \ell}} \prod_{j \ge 1}\frac{A_j^{m_j}}{m_j!}| \le \sum\limits_{\ell \ge 2} \sum\limits_{\substack{\lambda\vdash m \\ \ell(\lambda) = \ell}} \prod_{j \ge 1}\frac{|A_j|^{m_j}}{m_j!}=\sum_{\ell=2}^m T_\ell<M$$
hence $J_m$ has the same sign as $A_m$ and \eqref{24} gives that $(-1)^{m-1}J_m>0$, hence $(-1)^{m-1}I_{2m}>0$ by \eqref{j}. This proves Claim \ref{finale}.
\end{proof}
\renewcommand{\proofname}{Proof}
To finish the proof of Theorem \ref{main}, we choose $m=\lfloor \frac{n}{2} \rfloor$ in Claim \ref{finale}. Suppose that $r<F(m)$. Then $c_{2m}(\F)H^{n-2m}=I_{2m} \ne 0$ by Claim \ref{finale}. Since $r < 2m$, this is a contradiction (see Remark \ref{dopor}) and the theorem is proved for $n$ odd or $n$ even and $X$ very general. Finally suppose that $n$ is even and $X$ has an Ulrich bundle $\E$ of rank $r$. Then its hyperplane section has odd dimension $n-1$ and again an Ulrich bundle, restriction of $\E$, of rank $r$. Hence, from the odd case, we get that $r \ge F(\frac{n-2}{2})$ and the proof is complete.
\end{proof}

\section{Numerical values}
\label{num}

We show here some calculations that allow to understand when $F(m)=2m$.
 
Using the fact that $\sin(x) \le x$ for $x \ge 0$ it is easily shown that $F(m)=2m$, that is $2m \le B(m)$, for $m \ge 21$.  We also note that $B(m) \ge \frac{3m}{2}$ for $3 \le m \le  20$.

For $3 \le m \le 20$, setting $x=\frac{\sin(\frac{\pi}{d})}{\frac{\pi}{d}}$, the inequality $2m \le B(m)$ is equivalent to
$$48m(2m-3)x^2+10m(2m-1)x-30(2m-1)(2m-3)\le 0$$
that is
$$0 \le x \le \frac{-5m(2m-1)+\sqrt{5m(2m-1)(2592-3461m+1162m^2)}}{48m(2m-3)}.$$
Setting
$$a(m)=\frac{-5m(2m-1)+\sqrt{5m(2m-1)(2592-3461m+1162m^2)}}{48m(2m-3)}$$
we see that
$$a(m)y-\sin(y) \ge 0$$
for $0 \le y \le \frac{\pi}{2}$ if and only if $y_0(m) \le y \le \frac{\pi}{2}$, where $y_0(m)$ is the only solution of the equation $a(m)y-\sin(y)=0$ for $0 < y \le \frac{\pi}{2}$. 

It follows that, for $3 \le m \le 20$, we have that $F(m)=2m$ if and only if $d \le \frac{\pi}{y_0(m)}$. In Table \ref{tab1} below we give the values of $y_0(m)$ and $d$ such that $F(m)=2m$. 

\begin{table}[htbp] % 'h' means here, 't' top, 'b' bottom, 'p' page
    \centering      % Centers the table horizontally on the page
    \caption{Values of $d$ for which $F(m)=2m$ }
    \label{tab1}
    % Adjusted column widths and used centered 'c' for clean math alignment
    \begin{tabular}{|c|c|l|} 
        \hline
        $m$ & $y_0(m)$ & value \\
        \hline
        3  & 0.93 & $d=3$       \\ \hline
        4  & 0.74 & $d \le 4$   \\ \hline
        5  & 0.62 & $d \le 4$   \\ \hline
        6  & 0.54 & $d \le 5$   \\ \hline
        7  & 0.48 & $d \le 6$   \\ \hline
        8  & 0.43 & $d \le 7$   \\ \hline
        9  & 0.38 & $d \le 8$   \\ \hline
        10 & 0.35 & $d \le 8$   \\ \hline
        11 & 0.31 & $d \le 9$   \\ \hline
        12 & 0.28 & $d \le 10$  \\ \hline
        13 & 0.25 & $d \le 12$  \\ \hline
        14 & 0.23 & $d \le 13$  \\ \hline
        15 & 0.20 & $d \le 15$  \\ \hline
        16 & 0.17 & $d \le 17$  \\ \hline
        17 & 0.15 & $d \le 20$  \\ \hline
        18 & 0.12 & $d \le 25$  \\ \hline
        19 & 0.09 & $d \le 34$  \\ \hline
        20 & 0.04 & $d \le 65$  \\ \hline
    \end{tabular}
\end{table}

\eject


\begin{thebibliography}{BKKMSU}

\bibitem[A]{ahl} L.~V.~Ahlfors.
\textit{Complex analysis. An introduction to the theory of analytic functions of one complex variable}. 
Internat. Ser. Pure Appl. Math. McGraw-Hill Book Co., New York, 1978. xi+331 pp.

%\bibitem[B1]{b1} A.~Beauville.
%\textit{Determinantal hypersurfaces}. 
%Michigan Math. J. \textbf{48} (2000), 39-64.

\bibitem[B]{b} A.~Beauville.
\textit{An introduction to Ulrich bundles}. 
Eur. J. Math. \textbf{4} (2018), no.~1, 26-36.

\bibitem[BGS]{bgs} R.~O.~Buchweitz, G.~M.~Greuel, F.~O.~Schreyer.
\textit{Cohen-Macaulay modules on hypersurface singularities. II}. 
Invent. Math. \textbf{88} (1987), no.~1, 165-182.

\bibitem[BES]{bes} M.~Bl\"aser, D.~Eisenbud, F.~O.~Schreyer.
\textit{Ulrich complexity}. 
Differential Geom. Appl. \textbf{55} (2017), 128-145.

%\bibitem[CFK]{cfk} C.~Ciliberto, F.~Flamini, A.~L.~Knutsen.
%\textit{Ulrich bundles on Del Pezzo threefolds}. 
%J. Algebra \textbf{634} (2023), 209-236.

%\bibitem[CH]{ch} M.~Casanellas, R.~Hartshorne.
%\textit{Stable Ulrich bundles}. With an appendix by F.~Geiss, F.-O.~Schreyer.
%Internat. J. Math. \textbf{23} (2012), no. 8, 1250083, 50 pp.

\bibitem[CMRPL]{cmrpl} L.~Costa, R.~M.~Mir\'o-Roig, J.~Pons-Llopis.
\textit{Ulrich bundles}.
De Gruyter Studies in Mathematics, \textbf{77}, De Gruyter 2021. 

\bibitem[E]{e} D.~Erman.
\textit{Matrix factorizations of generic polynomials}. 
Preprint arXiv:2112.08864.

\bibitem[ES]{es} D.~Eisenbud, F.-O.~Schreyer.
\textit{Resultants and Chow forms via exterior syzygies}. 
J. Amer. Math. Soc. \textbf{16} (2003), no. 3, 537-579.

\bibitem[F]{f} W.~Fulton.
\textit{Intersection theory}.
Ergeb. Math. Grenzgeb. (3), 2. Springer-Verlag, Berlin, 1998, xiv+470 pp.

\bibitem[FK]{fk} D.~Faenzi, Y.~Kim. 
\textit{Ulrich bundles on cubic fourfolds}. 
Comment. Math. Helv. \textbf{97} (2022), no.~4, 691-728.

\bibitem[HUB]{hub}
J.~Herzog, B.~Ulrich, J.~Backelin. 
\textit{Linear maximal Cohen-Macaulay modules over strict complete intersections}. 
J. Pure Appl. Algebra \textbf{71} (1991), no.~2-3, 187-202.

\bibitem[KS]{ks} Y.~Kim, F.-O.~Schreyer.
\textit{An explicit matrix factorization of cubic hypersurfaces of small dimension}. 
J. Pure Appl. Algebra \textbf{224} (2020), no.~8, 106346, 13 pp.

\bibitem[L]{laz} R.~Lazarsfeld. \textit{Positivity in algebraic geometry. II. Positivity for vector bundles, and multiplier ideals}.
Ergebnisse der Mathematik und ihrer Grenzgebiete. 3. Folge. A Series of Modern Surveys in Mathematics, \textbf{49}. Springer-Verlag, Berlin, 2004.

%\bibitem[LM]{lm} A.~F.~Lopez, R.~Mu\~{n}oz. 
%\textit{On the classification of non-big Ulrich vector bundles on surfaces and threefolds}.
%Internat. J. Math. \textbf{32} (2021), no.~14, Paper No.~2150111, 18 pp.

%\bibitem[LMS]{lms} A.~F.~Lopez, R.~Mu\~{n}oz, J.~C.~Sierra. 
%\textit{On the classification of non-big Ulrich vector bundles on fourfolds}.
%Ann. Sc. Norm. Super. Pisa Cl. Sci. (5) \textbf{26} (2025), no.~2, 707-755.

\bibitem[LR1]{lr1} A.~F.~Lopez, D.~Raychaudhury.
\textit{Ulrich subvarieties and the non-existence of low rank Ulrich bundles on complete intersections}.
Trans. Amer. Math. Soc. \textbf{379} (2026), no.\ 8, 5809-5846.

\bibitem[LR2]{lr2} A.~F.~Lopez, D.~Raychaudhury.
\textit{A lower bound on the Ulrich complexity of hypersurfaces}.
Preprint 2025, arXiv:2503.13396.

%\bibitem[LS]{ls} A.~F.~Lopez, J.~C.~Sierra. 
%\textit{A geometrical view of Ulrich vector bundles}.
%Int. Math. Res. Not. IMRN(2023), no.~11, 9754-9776.
 
\bibitem[RT1]{rt1} G.~V.~Ravindra, A.~Tripathi.
\textit{Rank 3 ACM bundles on general hypersurfaces in $\P^5$}. 
Adv. Math. \textbf{355} (2019), 106780, 33 pp.

\bibitem[RT2]{rt2} G.~V.~Ravindra, A.~Tripathi.
\textit{On the base case of a conjecture on ACM bundles over hypersurfaces}. 
Geom. Dedicata \textbf{216} (2022), no.~5, Paper No.~49, 10 pp.

\bibitem[S]{st} R.~P.~Stanley.
\textit{Enumerative combinatorics. Volume 1.}. 
Cambridge Stud. Adv. Math., \textbf{49}. Cambridge University Press, Cambridge, 2012. xiv+626 pp.

%\bibitem[T]{tr} A.~Tripathi.
%\textit{Rank 3 arithmetically Cohen-Macaulay bundles over hypersurfaces}. 
%J. Algebra \textbf{478} (2017), 1-11.

\end{thebibliography}
\end{document}